\newtheorem{teor}{Theorem}[section]
\newtheorem{lema}[teor]{Lemma}
\newtheorem{prop}[teor]{Proposition}
\def\Im {\mathop{\rm Im}\nolimits}
\begin{document}

\title{Representation dimension of  extensions of hereditary algebras}

\author{Manuel Saorín\\ Departamento de Matemáticas\\
Universidad de Murcia, Aptdo. 4021\\
30100 Espinardo, Murcia\\
SPAIN\\ {\it msaorinc@um.es}}

\date{}

\thanks{The author thanks Idun Reiten, Oeyvind Solberg and
specially Steffen Oppermann for very fruitful discussions and
suggestions that helped to improve a preliminary version of this
work}

\thanks{The author is supported by research projects from
the D.G.I. of the Spanish Ministry of Education and the Fundación
"Séneca" of Murcia (exp. 04555/GERM/06), with a part of FEDER
funds}

\maketitle

\begin{abstract}

{\bf We show that if $H$ is a hereditary finite dimensional
algebra, $M$ is a finitely generated $H$-module and $B$ is a
semisimple subalgebra of $End_H(M)^{op}$, then the representation
dimension of $\Lambda =\begin{pmatrix} B & 0\\ M & H\end{pmatrix}$
is less or equal to $3$ whenever one of the following conditions
hold: i) $H$ is of finite representation type; ii) $H$ is tame and
$M$ is a direct sum of regular and preprojective modules; iii) $M$
has no self-extensions
 }
\end{abstract}

\vspace*{0.5cm}

The representation dimension of an Artin algebra is the infimum of
the global dimensions of the endomorphism algebras of
generator-cogenerators of its category of finitely generated
modules. It was introduced by Auslander (cf. \cite{A}) and, his
own words, it was aimed at being a measure of how far an (in this
paper always Artin) algebra is from being of finite representation
type. Indeed, in that same paper, Auslander proved that an algebra
is of finite representation type if, and only if, its
representation dimension is less or equal than $2$. While the
concept was essentially forgotten for almost thirty years, two
breaking recent results have put it into the spotlight again. On
one side,  Iyama \cite{I}  proved that the representation
dimension of an algebra is always finite, and, on the other,
Rouquier \cite{R} showed that all natural numbers can be attained.
With these two results at hand,  Artin algebras can be, at least
in theory, classified numerically. In addition, representation
dimension is invariant under stable equivalences (cf. \cite{G} and
\cite{D}) and, when restricted to self-injective algebras,
invariant under derived equivalences (cf. \cite{X}), facts that
allow to construct classes of algebras of a given representation
dimension from others having the same property.

It is a natural goal to discover classes of algebras of infinite
representation type that, from the point of view of representation
dimension, are  the nearest  to being of finite type, namely,
those having representation dimension equal to $3$. Examples of
these algebras available in the literature include the hereditary
\cite{A}, stably hereditary   \cite{X}, special biserial
\cite{EHIS}, Schur algebras of tame representation type \cite{H1},
local algebras of quaternion type \cite{H2}, selfinjective
algebras (socle equivalent to) weakly symmetric algebras of
Euclidean type \cite{BHS}, tilted and laura algebras \cite{APT}
and canonical algebras \cite{O1}.

In this paper we consider generalizations of one-point extensions
of hereditary algebras, namely, triangular algebras of the form $\Lambda =\begin{pmatrix} B & 0\\
M & H\end{pmatrix}$, where $H$ is a hereditary algebra over an
algebraically closed field $K$, $M$ is a (left) $H$-module and $B$
is a semisimple subalgebra of $End_H(M)^{op}$. We  find sufficient
conditions for those algebras to have representation dimension
$3$. Note that, due to recent results of Oppermann \cite{O}, every
wild algebra admits one-point extensions of representation
dimension $\geq 4$. So there are choices of $H$ and $M$ for which
$rep.dim(\Lambda )>3$.

The first main result of the paper, Proposition \ref{tame case},
states that $rep.dim(\Lambda )\leq 3$ whenever one of the
following two conditions holds: i) $H$ is of finite representation
type; ii) $H$ is tame and $M$ is a direct sum of preprojective and
regular modules. The second main result, Theorem \ref{main
theorem}, states that if $H$ is of infinite representation type
and $M$ has no self-extensions, then $rep.dim(\Lambda )=3$. The
proof of this theorem is based on the construction of an Auslander
generator $\hat{G}$ of $\Lambda-mod$ derived from the existence of
an Auslander generator $G$ of $H-mod$ which contains $M$ as a
direct summand (see Proposition \ref{Auslander generator of H
versus Lambda} and Proposition \ref{Auslander generator of H}).

With notation as above,  notice that if $B=B_1\times...\times B_r$
is the decomposition of $B$ into a direct product of simple
algebras, then the central idempotents of $B$ corresponding to
that decomposition give a decomposition $M=\oplus_{1\leq i\leq
r}M_i$ such that $B_i\subseteq End_H(M_i)$, for every $i=1,...,r$.
Moreover,  if $B_i\cong M_{n_i}(K)$ then $M_i\cong
\tilde{M}_i^{n_i}$ for some $H$-module $\tilde{M}_i$. It is clear
that $\Lambda$ is a basic algebra if, and only if, $H$ is basic
and $n_i=1$ for $i=1,...,r$. Without loss of generality, we can
and shall assume in the sequel that these two conditions hold.
Notice that, even with that restriction, the $M_i$ need not be
indecomposable. Notice also that if the chosen decomposition of
$M$ is the trivial one (i.e. $r=1$ and $M=M_1=\tilde{M}_1$ above),
then $\Lambda$ is just the
one-point extension $\begin{pmatrix} K & 0\\
M & H\end{pmatrix}$.

It is well-known (cf. [ARS]) that every $\Lambda$-module is then
identified by a triple $(V,X,f)$ consisting of a  $B$-module $V$,
an $H$-module $X$ and a homomorphism of $H$-modules
$f:M\otimes_BV\longrightarrow X$. Implicitly assuming $f$, we
shall
write $\Lambda$-modules as 2-entry columns $\begin{pmatrix} V\\
X\end{pmatrix}$ and the multiplications by elements of $\Lambda$
will be just left matrix multiplication. In that case the full
subcategory of $\Lambda -mod$ formed by the objects of the form $\begin{pmatrix} 0\\
X\end{pmatrix}$ is canonically identified with the category $H
-mod$.

 Note that if
$f^t:V\longrightarrow Hom_H(M,X)$ denotes the transpose of $f$,
which is a homomorphism of $B$-modules,  then due to the
semisimplicity of $B$ we have a decomposition
$\begin{pmatrix} V\\
X\end{pmatrix}\cong \begin{pmatrix} Ker(f^t)\\
0\end{pmatrix}\oplus\begin{pmatrix} Im(f^t)\\
X\end{pmatrix}$. That will allows us to reduce many arguments to the
case in which $V\subseteq Hom_H(M,X)$ is a $B$-submodule and the map
$M\otimes_BV\longrightarrow X$ is the canonical one: $m\otimes
v\rightsquigarrow v(m)$.

\begin{prop} \label{tame case}
Let $\Lambda =\begin{pmatrix}B & 0\\ M & H
\end{pmatrix}$ be as  above. If $H$ is of finite representation type or if
$H$ is tame and $M$ is a direct sum of regular and preprojective
$H$-modules , then there only finitely many indecomposable
torsionless $\Lambda$-modules up to isomorphism. In particular

\begin{center}
$rep.dim(\Lambda )\leq 3$
\end{center}
\end{prop}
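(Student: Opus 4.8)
The plan is to show that there are only finitely many indecomposable torsionless $\Lambda$-modules, and then invoke the known criterion (due to Auslander, as used e.g. in the references of the excerpt) that if the category of torsionless modules is of finite type — more precisely, if there is a finitely generated module $N$ such that every torsionless module is a direct summand of a direct sum of copies of $N$ — then $rep.dim(\Lambda)\le 3$. Indeed $\Lambda\oplus D(\Lambda)\oplus N$ will then be an Auslander generator, because for any $\Lambda$-module $Z$ one takes a projective presentation $P_1\to P_0\to Z\to 0$, whose image of $P_1\to P_0$ is a submodule of a projective, hence torsionless, hence in $\mathrm{add}(N)$, giving the required $\mathrm{add}(\Lambda\oplus D(\Lambda)\oplus N)$-approximation resolution of length $2$. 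So the whole weight of the proposition is in the finiteness statement.

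To analyze torsionless $\Lambda$-modules, I would use the description of $\Lambda$-modules as triples $\begin{pmatrix} V\\ X\end{pmatrix}$ with $f:M\otimes_B V\to X$, together with the reduction noted just before the proposition: after splitting off a summand $\begin{pmatrix} \mathrm{Ker}(f^t)\\ 0\end{pmatrix}$ we may assume $V\subseteq \mathrm{Hom}_H(M,X)$ is a $B$-submodule and $f$ is evaluation. First I would identify the indecomposable projective $\Lambda$-modules: the projective covers of the simple tops coming from $H$ are $\begin{pmatrix} \mathrm{Hom}_H(M,P)\\ P\end{pmatrix}$ (with $f$ the canonical map) for $P$ an indecomposable projective $H$-module — using that $H$ is hereditary and that $B$ is a subalgebra of $\mathrm{End}_H(M)^{op}$ — while the projectives at the $B$-vertices are of the form $\begin{pmatrix} B_i\\ M_i\end{pmatrix}$. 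A torsionless $\Lambda$-module is a submodule of a finite direct sum of these, so $\begin{pmatrix} V\\ X\end{pmatrix}$ torsionless forces $X$ to be a submodule of a direct sum of projective $H$-modules and a direct sum of copies of the $M_i$; since $H$ is hereditary, such an $X$ is again a direct sum of projective $H$-modules and (summands of powers of) the $M_i$ — and here is where the hypotheses enter: in case (i) there are only finitely many indecomposable $H$-modules at all, and in case (ii) a submodule of a sum of preprojective and regular modules over a tame hereditary algebra is again preprojective-plus-regular, but crucially $M$ itself being preprojective-plus-regular means only finitely many $H$-modules can occur as summands of the relevant $X$'s. (This last point needs the structure theory of tame hereditary algebras: the preprojective component is directed so only finitely many preprojectives embed into a fixed module, and the regular part decomposes into finitely many tubes so that submodules of a fixed regular module are again supported on those finitely many tubes and bounded in quasi-length.)

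Once $X$ ranges over a finite set $\mathcal{X}$ of $H$-modules (up to the multiplicities, which I address in a moment), the module $\begin{pmatrix} V\\ X\end{pmatrix}$ is determined by the $B$-submodule $V\subseteq \mathrm{Hom}_H(M,X)$, and since $B$ is semisimple with finitely many simple modules and $\mathrm{Hom}_H(M,X)$ is finite-dimensional, there are only finitely many choices of $V$ up to isomorphism as a $B$-module — but one must be a little careful, since $\Lambda$-module isomorphism is finer than separately choosing $X$ and $V$. The clean way is: the indecomposable torsionless $\Lambda$-modules are, up to isomorphism, either of the form $\begin{pmatrix} S\\ 0\end{pmatrix}$ for $S$ a simple $B$-module, or of the form $\begin{pmatrix} V\\ X\end{pmatrix}$ with $X$ indecomposable in the finite list and $V\subseteq \mathrm{Hom}_H(M,X)$ a $B$-submodule, and up to $\Lambda$-isomorphism $V$ can be taken inside a fixed finite-dimensional space, so finitely many such submodules and hence finitely many isoclasses. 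The multiplicities issue — that a torsionless module could embed in a large power of a projective — is handled by noting that any indecomposable summand still has $X$ with bounded dimension (bounded by the dimension of a single indecomposable projective, since $H$ hereditary implies $X$ is a summand of the relevant injective hull pattern) so the list is genuinely finite.

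The main obstacle I anticipate is case (ii): proving that, over a tame hereditary $H$, a submodule of a finite direct sum of copies of a fixed preprojective-plus-regular module $M$ is again of that form \emph{and} ranges over only finitely many isomorphism types. The preprojective part is easy because preprojective components are directed and a preprojective module embeds into only finitely many others up to the fixed one; the delicate part is the regular part, where one needs that submodules of regular modules are regular (true over tame hereditary because there are no maps from preinjectives or regulars to preprojectives, and $\mathrm{Hom}$ from regular to preprojective vanishes) and that within the finitely many tubes supporting $M$ the quasi-lengths of submodules are bounded by those of $M$, giving finiteness. I would carry out: (1) the structure of projective $\Lambda$-modules; (2) the finiteness of the possible $X$'s under (i) and under (ii); (3) the finiteness of $B$-submodules $V$; (4) the reduction to finitely many $\Lambda$-isoclasses; (5) the general lemma that torsionless-finite implies $rep.dim\le 3$. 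Steps (1),(3),(4),(5) are routine; step (2) under hypothesis (ii) is the heart of the matter.
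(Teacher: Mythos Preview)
Your identification of the indecomposable projective $\Lambda$-modules at the $H$-vertices is wrong: they are simply $\begin{pmatrix}0\\ P\end{pmatrix}$ for $P$ indecomposable projective over $H$, not $\begin{pmatrix}\Hom_H(M,P)\\ P\end{pmatrix}$ (compute $\Lambda e$ for the idempotent $e=\begin{pmatrix}0&0\\0&1\end{pmatrix}$). This matters, because it makes the analysis of the $V$-part trivial: the radical of every indecomposable projective $\Lambda$-module lies in the $H$-part, so every indecomposable non-projective torsionless $\Lambda$-module is of the form $\begin{pmatrix}0\\ Y\end{pmatrix}$ with $Y$ an indecomposable $H$-module in $Sub(M)$. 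Your whole discussion of enumerating $B$-submodules $V\subseteq\Hom_H(M,X)$ is therefore unnecessary; and as written it is also not correct, since over an infinite field a fixed finite-dimensional space has infinitely many subspaces, so ``finitely many such submodules'' does not follow.

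The serious gap, however, is in case (ii). You assert that over a tame hereditary algebra submodules of regular modules are regular, justifying this by ``$\Hom$ from regular to preprojective vanishes''. That vanishing is true but irrelevant: what you would need is $\Hom(\text{preprojective},\text{regular})=0$, and this is false --- preprojective indecomposables embed into regular modules routinely (think of the Kronecker quiver). So once you split $M=X\oplus R$ into its preprojective and regular parts, you correctly dispose of indecomposables mapping nontrivially to $X$ (predecessors in the directed preprojective component), and you correctly bound the regular indecomposables in $Sub(R)$ via the tube structure, but you have \emph{not} bounded the preprojective indecomposables in $Sub(R)$. This is exactly where the work lies. The paper's argument here is: if $Z$ is preprojective nonprojective and $Z\hookrightarrow R^m$, then applying $\tau_H$ (and using $\tau_H R\cong R$ after enlarging $R$) gives $\tau_H Z\hookrightarrow R^m$; hence the set $\{n:\tau^{-n}P\in Sub(R)\}$ is downward closed for each indecomposable projective $P$, and if it were infinite one would get all $\tau^{-n}P$ embedding in $R^q$ for a fixed $q$, contradicting the unboundedness of dimension vectors along a preprojective $\tau$-orbit. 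You need an argument of this kind; directedness alone does not suffice.
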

\begin{proof}
The final assertion is a consequence of the first due to a recent
result of Ringel (\cite{R2}). As for the first sentence, notice that
the indecomposable projective $\Lambda$-modules are the projective
$H$-modules plus the modules $\begin{pmatrix} Kp_i\\
M_i
\end{pmatrix}$, where $p_i:M\longrightarrow M_i$ is the $i$-th projection
associated to the given decomposition of $M$. Since the radical of
$\begin{pmatrix}Kp_i\\ M_i
\end{pmatrix}$ is $\begin{pmatrix}0\\ M_i
\end{pmatrix}$ the indecomposable torsionless $\Lambda$-modules are the
projective ones   plus all the indecomposable $H$-modules in
$Sub(M)$. Therefore the case in which $H$ is of finite
representation type is obvious.

We assume in the sequel that $H$ is tame and $M$ admits a
decomposicion $M=X\oplus R$  as a direct sum of a preprojective
$H$-module $X$ and a regular
$H$-module $R$. If $\begin{pmatrix} u\\
v\end{pmatrix}:Z\rightarrowtail X^n\oplus R^n=M^n$ is an
monomorphism from the indecomposable $H$-module $Z$, then either
$u\neq 0$, in which case $Z$ is a (preprojective) predecessor of
some of the indecomposable summands of $X$ or, else, $v$ is a
monomorphism so that $Z\in Sub(R)$. By the well-known structure of
the subcategory of regular $H$-modules, the number of regular
indecomposable $H$-modules in $Sub(R)$ is finite. So the problem is
reduced to prove that if $R$ is any regular $H$-module, then
$Sub(R)$ contains only finitely many preprojective indecomposable
$H$-module. For that there is no loss of generality in assuming that
$R$ is multiplicity-free and, by adding some regular indecomposable
summands if necessary, also that $\tau_HR=R$. Notice that if
$f:Z\rightarrowtail R^m$ is an (indecomposable) monomorphism, where
$Z$ is a preprojective nonprojective indecomposable, then
$\tau_H(f):\tau_HZ\rightarrowtail\tau_H(R)^m\cong R^m$ is also an
(indecomposable) monomorphism (cf. [Kerner, Lemma 2.2]). In
particular, given any preprojective indecomposable $H$-module $Z$,
the set of natural numbers $S_Z=:\{n\geq 0:$ $\tau^{-n}Z\in
Sub(R)\}$ is closed under predecessors (i.e. $n\in S_Z$  implies
$n-1\in S_Z$). If there were infinitely many indecomposable
preprojective modules $Z$ in $Sub(R)$ we would conclude that there
is a projective indecomposable $H$-module $P$ such  that
$\tau^{-n}P\in Sub(R)$ for all $n\geq 0$.

Let us assume that such a $P$ exists. We then denote by $\varphi
(n)$ the largest of the positive integers $r$ such that there is a
monomorphism $\tau^{-n}P\rightarrowtail R^r$ which is an
indecomposable map. The argument in the above paragraph shows that
$\varphi (n-1)\geq\varphi (n)$. As a consequence the map $\varphi
:\mathbf{N}\longrightarrow\mathbf{N}$ is eventually constant, so
that we have a natural number $q$ such that $\varphi (n)=q$ for
$n>>0$. But then $\underline{dim}(\tau^{-n}P)\leq q\cdot
\underline{dim}(R)$ for all $n>>0$. That implies that there are only
finitely many dimension vectors of  modules in the $\tau$-orbit of
$P$. This is known to be false for preprojective indecomposable
modules are identified by their dimension vectors (cf. \cite{R1}).
\end{proof}

We now give an auxiliary result which is valid for every hereditary
algebra $H$.

\begin{prop} \label{Auslander generator of H versus Lambda}
Suppose that, in our situation, the $H$-module $M$ has no
selfextensions and that we have found an Auslander generator of $H
-mod$ containing $M$ as a direct summand.
Then the $\Lambda$-module $\hat{G}=\begin{pmatrix}0\\
G
\end{pmatrix}\oplus\begin{pmatrix} B\\ M
\end{pmatrix}\oplus D\Lambda$ satisfies that

\begin{center}
$gl.dim(End_\Lambda (\hat{G}))\leq 3$.
\end{center}
\end{prop}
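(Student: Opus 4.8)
The plan is to show directly that the $\Lambda$-module $\hat G$ is a generator-cogenerator whose endomorphism algebra has global dimension at most $3$. Since $D\Lambda$ is a summand of $\hat G$, the module $\hat G$ is a cogenerator; since $\Lambda = P(B)\oplus P(H)$ as a left module and $\begin{pmatrix}B\\ M\end{pmatrix}$ together with $\begin{pmatrix}0\\ G\end{pmatrix}\supseteq\begin{pmatrix}0\\ H\end{pmatrix}$ account for all the indecomposable projectives described in the proof of Proposition \ref{tame case}, $\hat G$ is also a generator. Thus it suffices to prove $gl.dim(\End_\Lambda(\hat G))\leq 3$, and by the standard Auslander-type criterion this reduces to showing that every $\Lambda$-module $N$ has an $\add(\hat G)$-resolution
\[
0\longrightarrow G_2\longrightarrow G_1\longrightarrow G_0\longrightarrow N\longrightarrow 0
\]
with $G_i\in\add(\hat G)$ which stays exact after applying $\Hom_\Lambda(\hat G,-)$; equivalently, for which the induced maps are $\add(\hat G)$-epimorphisms onto the successive kernels.

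First I would use the reduction recorded in the paragraph before Proposition \ref{tame case}: splitting off the summand $\begin{pmatrix}\Ker(f^t)\\ 0\end{pmatrix}$, which lies in $\add\begin{pmatrix}B\\ M\end{pmatrix}\subseteq\add(\hat G)$ after noting that every simple $B$-module sits inside $\begin{pmatrix}B\\M\end{pmatrix}$ with the induced $f$, I may assume $N=\begin{pmatrix}V\\ X\end{pmatrix}$ with $V\subseteq\Hom_H(M,X)$ a $B$-submodule and $f$ the evaluation map. Next I would invoke the hypothesis to fix an Auslander generator $G$ of $H\text{-}mod$ with $M\mid G$, so that $X$ has a $3$-term $\add(G)$-resolution $0\to G^X_2\to G^X_1\to G^X_0\xrightarrow{\pi} X\to 0$ that is $\Hom_H(G,-)$-exact. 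The key construction is to lift this to $\Lambda$: put $\hat G_0=\begin{pmatrix}\Hom_H(M,G^X_0)\\ G^X_0\end{pmatrix}$ (a module in $\add(\hat G)$ because $M\mid G$ forces each $\begin{pmatrix}\Hom_H(M,G')\\ G'\end{pmatrix}$ with $G'\mid G$ to be a summand of a suitable $\begin{pmatrix}B\\M\end{pmatrix}$-power plus $\begin{pmatrix}0\\ G'\end{pmatrix}$, using that $M$ has no self-extensions to control the relevant $\Ext^1_H$), and map it onto $N$ via $\pi$ in the second coordinate and the obvious map on $\Hom$-spaces in the first. One checks this is an epimorphism onto $\begin{pmatrix}\Im(f^t)\\X\end{pmatrix}$, hence onto $N$, and computes its kernel to be of the same shape $\begin{pmatrix}V'\\ X'\end{pmatrix}$ with $X'=\Ker\pi$. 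Iterating twice and using that the original $H$-resolution terminates after two steps — together with the hereditariness of $H$ to see that the kernel at the third stage lies in $\add(G)$ in the bottom coordinate and is "free" ($V'$ full) in the top — yields a length-$3$ $\add(\hat G)$-resolution; the $\Hom_\Lambda(\hat G,-)$-exactness is inherited from the $\Hom_H(G,-)$-exactness of the $H$-resolution plus exactness of $\Hom_B(B,-)$ on the semisimple algebra $B$.

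The main obstacle I anticipate is verifying that the lifted resolution remains exact after applying $\Hom_\Lambda(\hat G,-)$, i.e. that each syzygy map is not merely an epimorphism but an $\add(\hat G)$-approximation. This is where the two standing hypotheses must be used essentially: the vanishing $\Ext^1_H(M,M)=0$ (and more generally control of $\Ext^1_H(M,G')$ for $G'\mid G$) is what makes the passage $X\mapsto\begin{pmatrix}\Hom_H(M,X)\\X\end{pmatrix}$ send $\add(G)$-approximations in $H\text{-}mod$ to $\add(\hat G)$-approximations in $\Lambda\text{-}mod$, via the long exact sequence relating $\Hom_\Lambda\!\left(\begin{pmatrix}B\\M\end{pmatrix},-\right)$ to $\Hom_H(M,-)$ on the second coordinate; and the Auslander-generator property of $G$ is exactly what guarantees the $H$-side resolution has the required exactness to begin with. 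A secondary technical point is handling the cogenerator summand $D\Lambda$: one must check that adding $D\Lambda$ does not increase the global dimension of the endomorphism ring beyond $3$, which follows from the general fact that an injective cogenerator contributes projective-injective summands to the endomorphism algebra, so the resolutions above can be modified near $N$ when $N$ is injective without lengthening them.
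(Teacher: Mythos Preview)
Your central construction does not work: the modules $\hat G_i=\begin{pmatrix}\Hom_H(M,G_i^X)\\ G_i^X\end{pmatrix}$ are in general \emph{not} in $\add(\hat G)$. If $G'$ is an indecomposable summand of $G$ with $\Hom_H(M,G')\neq 0$, then $\begin{pmatrix}\Hom_H(M,G')\\ G'\end{pmatrix}$ is itself indecomposable (its endomorphism ring is $\End_H(G')$, since any endomorphism $(\alpha,\beta)$ is forced to have $\alpha=\Hom_H(M,\beta)$), and the indecomposable summands of $\hat G$ are only the $\begin{pmatrix}0\\G_j\end{pmatrix}$, the projectives $\begin{pmatrix}Kp_i\\M_i\end{pmatrix}$, and the injectives $\begin{pmatrix}\Hom_H(M,I)\\ I\end{pmatrix}$ and $\begin{pmatrix}S_i\\0\end{pmatrix}$. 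So unless $G'$ is injective or one of the $M_i$, your $\hat G_i$ falls outside $\add(\hat G)$; the claim that it is ``a summand of a suitable $\begin{pmatrix}B\\M\end{pmatrix}$-power plus $\begin{pmatrix}0\\G'\end{pmatrix}$'' is not correct. There is also an off-by-one error in your use of the Auslander criterion: for $gl.\dim\End_\Lambda(\hat G)\leq 3$ one needs, for every $N$, a \emph{two}-term $\add(\hat G)$-resolution $0\to G_1\to G_0\to N\to 0$ exact under $\Hom_\Lambda(\hat G,-)$ (and likewise $G$ being an Auslander generator of $H\text{-}\mod$ gives two-term, not three-term, $\add(G)$-resolutions).

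The paper's argument is organized quite differently: it works contravariantly, showing $pd_{\End_\Lambda(\hat G)}\Hom_\Lambda(N,\hat G)\leq 1$ for every indecomposable $N=\begin{pmatrix}V\\X\end{pmatrix}$. The key observation is that when the evaluation $f:M\otimes_BV\to X$ is \emph{surjective}, one has $\Hom_\Lambda\bigl(N,\begin{pmatrix}0\\G\end{pmatrix}\bigr)=0$ and every nonzero map $N\to\begin{pmatrix}Kp_i\\M_i\end{pmatrix}$ is a split epimorphism; hence only injective summands of $\hat G$ receive radical maps from $N$, and the length-one injective coresolution of $N$ (which exists in the required form because $\Ext^1_H(M,X)=0$) already gives the desired bound. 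The case $V=0$ is handled by the given $\add(G)$-coresolution of $X$ in $H\text{-}\mod$, and the general case is reduced to these via the exact sequence $0\to\begin{pmatrix}V\\ \Im f\end{pmatrix}\to N\to\begin{pmatrix}0\\ \mathrm{Coker}\,f\end{pmatrix}\to 0$ together with a separate computation that the simple $\End_\Lambda(\hat G)$-modules $\Sigma_i$ sitting at the tops of the $\begin{pmatrix}Kp_i\\M_i\end{pmatrix}$ have projective dimension $\leq 2$.
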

\begin{proof}
Since the simple modules $\begin{pmatrix}Kp_i\\0 \end{pmatrix}$
are injective, whence belong to $Add(\hat{G})$,
without loss of generality, we can deal only with $\Lambda$-modules of the form  $\begin{pmatrix}V\\
X
\end{pmatrix}$, with $V$ a $B$-submodule of $Hom_H(M,X)$. In that
case, we claim that if $\begin{pmatrix}V\\ X
\end{pmatrix}$ is indecomposable and the canonical map
$f:M\otimes_BV\longrightarrow X$ is surjective, then
$[\begin{pmatrix}V\\ X
\end{pmatrix},\begin{pmatrix}0\\ G
\end{pmatrix}]=0$ and $rad[\begin{pmatrix}V\\ X
\end{pmatrix},\begin{pmatrix}B\\ M
\end{pmatrix}]=0$, where $[-,-]$ denotes $Hom_\Lambda (-,-)$. Indeed
in the first case a morphism $\begin{pmatrix}V\\ X
\end{pmatrix}\longrightarrow\begin{pmatrix}0\\ G
\end{pmatrix}$ is identified by a morphism $u:X\longrightarrow G$
such that $u\circ v=0$, for all $v\in V$. But then $u\circ f=0$ and
so $u=0$. In the second case we consider the initial decomposition
$M=M_1\oplus ...\oplus M_r$ and the associated projections
$p_i:M\longrightarrow M_i$, so that $\begin{pmatrix}B\\ M
\end{pmatrix}=\oplus_{1\leq i\leq r}\begin{pmatrix}Kp_i\\ M_i
\end{pmatrix}$ is the decomposition of $\begin{pmatrix}B\\ M
\end{pmatrix}$ into a direct sum of (projective) indecomposable $\Lambda$-modules.
By the above argument, a nonzero morphism $\psi :\begin{pmatrix}V\\
X
\end{pmatrix}\longrightarrow \begin{pmatrix}Kp_i\\ M_i
\end{pmatrix}$ cannot have image contained
in $rad(\begin{pmatrix}Kp_i\\ M_i
\end{pmatrix})=\begin{pmatrix}0\\ M_i
\end{pmatrix}$ because $\begin{pmatrix}0\\ M_i
\end{pmatrix}$ is a direct summand of $\begin{pmatrix}0\\ G
\end{pmatrix}$. Therefore $\psi$ is a (split) epimorphism.

We need to prove that the projective dimension of
$[\begin{pmatrix}V\\ X
\end{pmatrix},\hat{G}]$ as an $End_\Lambda (\hat{G})$ is $\leq 1$,
for all indecomposable $\Lambda$-modules $\begin{pmatrix}V\\ X
\end{pmatrix}$. By the above paragraph, if $f:M\otimes_BV\longrightarrow
X$ is surjective and $\begin{pmatrix}V\\ X
\end{pmatrix}\not\in Add(\begin{pmatrix}B\\ M
\end{pmatrix})$, then the only indecomposable summands of $\hat{G}$ on which
$[\begin{pmatrix}V\\ X
\end{pmatrix},-]$ does not vanish are the injectives. Since $X\in Fac(M)$ and $M$ has no self-extensions, we get that
$Ext_H^1(M,X)=0$ and then the minimal injective resolution of
$\begin{pmatrix}V\\ X
\end{pmatrix}$ is of the form

\begin{center}
$0\rightarrow \begin{pmatrix}V\\ X
\end{pmatrix}\hookrightarrow \begin{pmatrix}Hom_H(M,E(X))\\ E(X)
\end{pmatrix}\longrightarrow\begin{pmatrix}W\\ 0
\end{pmatrix}\oplus\begin{pmatrix}Hom_H(M,\Omega^{-1}X)\\\Omega^{-1}X
\end{pmatrix}\rightarrow 0$,
\end{center}
for some $B$-module $W$, and is kept exact by the functor
$[-,\hat{G}]$. Then $pd([\begin{pmatrix}V\\ X
\end{pmatrix},\hat{G}])\leq 1$ in this case.

We next consider the case in which $V=0$, i.e., $\begin{pmatrix}V\\ X \end{pmatrix}=\begin{pmatrix}0\\
X
\end{pmatrix}$ is an $H$-module. Since $G$ is an Auslander generator
of $H -mod$, we have an exact sequence $0\rightarrow
X\longrightarrow G_0\longrightarrow G_1\rightarrow 0$ which is kept
exact when applying $Hom_H(-,G)$. Then we also get an exact sequence
of $End_\Lambda (\hat{G})$-modules

\begin{center}
$0\rightarrow [\begin{pmatrix}0\\ G_1
\end{pmatrix},\hat{G}]\longrightarrow[\begin{pmatrix}0\\ G_0
\end{pmatrix},\hat{G}]\longrightarrow [\begin{pmatrix}0\\ X
\end{pmatrix},\hat{G}]\rightarrow 0$,
\end{center}
thus showing that $pd([\begin{pmatrix}0\\ X
\end{pmatrix},\hat{G}])\leq 1$.

Finally, we consider an arbitrary indecomposable $\Lambda$-module
$\begin{pmatrix}V\\X \end{pmatrix}$. Then we have an exact
sequence

\begin{center}
$0\rightarrow \begin{pmatrix}V\\ Im(f)
\end{pmatrix}\stackrel{j}{\hookrightarrow}\begin{pmatrix}V\\ X \end{pmatrix}\stackrel{\pi}{\twoheadrightarrow}\begin{pmatrix}0\\ Coker(f) \end{pmatrix}\rightarrow
0$.
\end{center}
If now $Z$ is any indecomposable summand of $\hat{G}$, then, by
the first paragraph of this proof, the map $[j,Z]$ is surjective
except in case $Z\cong\begin{pmatrix}Kp_i\\M_i \end{pmatrix}$, for
some $i=1,...,r$. But that means that all composition factors of
the $End_\Lambda (\hat{G})$-module $Coker[j,\hat{G}]$ are of the
form $\Sigma_i=:[\begin{pmatrix}Kp_i\\M_i
\end{pmatrix},\hat{G}]/rad([\begin{pmatrix}Kp_i\\M_i \end{pmatrix},\hat{G}])$, with  $i=1,...,r$.
Suppose we prove that $pd(\Sigma_i)\leq 2$ for all $i=1,...,r$.
Then we consider the exact sequence

\begin{center}
$0\rightarrow Im[j,\hat{G}]\hookrightarrow
[\begin{pmatrix}V\\Im(f)
\end{pmatrix},\hat{G}]\longrightarrow Coker [j ,\hat{G}] \rightarrow 0$.
\end{center}
By the above paragraphs of this proof, we know that its central
term has projective dimension $\leq 1$ and, hence, we also have
$pd(Im[j,\hat{G}])\leq 1$. But then the outer nontrivial terms in
the sequence

\begin{center}
$0\rightarrow [\begin{pmatrix}0\\ Coker(f)
\end{pmatrix},\hat{G}]\longrightarrow [\begin{pmatrix}V\\X \end{pmatrix},\hat{G}]\longrightarrow Im[j,\hat{G}]\rightarrow 0$
\end{center}
have projective dimension $\leq 1$, so that $pd([\begin{pmatrix}V\\
X
\end{pmatrix}),\hat{G}]\leq 1$ and the proof would be finished.

It remains to prove that $pd (\Sigma_i)\leq 2$ for all
$i=1,...,r$. Since the canonical map
$M\otimes_BKp_i\longrightarrow M_i$ is surjective, by the first
paragraph of this proof, we know that if $Z$ is an indecomposable
summand of $\hat{G}$ such that $rad[\begin{pmatrix}Kp_i\\M_i
\end{pmatrix},Z]\neq 0$, then $Z$ is injective. We then consider the injective envelope $u:\begin{pmatrix}Kp_i\\ M_i
\end{pmatrix}\hookrightarrow\begin{pmatrix}Hom_H(M,E(M_i))\\
E(M_i)
\end{pmatrix}$, which is induced by the injective envelope $M_i\hookrightarrow E(M_i)$ in $H -mod$. Then the image of the map
$[u,\hat{G}]:[\begin{pmatrix}Hom_H(M,E(M_i))\\ E(M_i)
\end{pmatrix},\hat{G}]\longrightarrow [\begin{pmatrix}Kp_i\\ M_i
\end{pmatrix},\hat{G}]$ is precisely $rad([\begin{pmatrix}Kp_i\\M_i
\end{pmatrix},\hat{G}])$, and therefore its cokernel is
$\Sigma_i$. Note that, due to the fact that $Ext_H^1(M,M_i)=0$,
the minimal injective resolution of $\begin{pmatrix}Kp_i\\ M_i
\end{pmatrix}$ is

\begin{center}
$0\rightarrow\begin{pmatrix}Kp_i\\M_i
\end{pmatrix}\stackrel{u}{\hookrightarrow}\begin{pmatrix}Hom_H(M,E(M_i))\\
E(M_i)
\end{pmatrix}\longrightarrow \begin{pmatrix}W\\0
\end{pmatrix}\oplus\begin{pmatrix}Hom_H(M,\Omega^{-1}M_i)\\
\Omega^{-1}M_i
\end{pmatrix}\rightarrow 0$,
\end{center}
where $W$ is a $B$-submodule of $Hom_H(M,M_i)$ complementary of
$Kp_i$. We then get as projective resolution of $\Sigma_i$:

\begin{center}
$0\rightarrow [\begin{pmatrix}W\\0
\end{pmatrix}\oplus\begin{pmatrix}Hom_H(M,\Omega^{-1}M_i)\\
\Omega^{-1}M_i
\end{pmatrix},\hat{G}]\longrightarrow
[\begin{pmatrix}Hom_H(M,E(M_i))\\ E(M_i)
\end{pmatrix},\hat{G}]\longrightarrow [\begin{pmatrix}Kp_i\\ M_i
\end{pmatrix},\hat{G}]\twoheadrightarrow\Sigma_i\rightarrow 0$,
\end{center}
which shows that $pd(\Sigma_i)\leq 2$.

\end{proof}

\begin{lema} \label{finite subextension type}
Let $M$ be an $H$-module such that $Ext_H^1(M,M)=0$. The following
assertions are equivalent for an indecomposable module $U$:

\begin{enumerate}
\item $U$ belongs to $Sub(M)\cap Ker Ext_H^1(M,-)$  \item $U$ is either a direct summand of $M$ or  a
direct summand of $Ker(f)$, for some minimal right
$add(M)$-approximation $f:M'\longrightarrow X$.
\end{enumerate}
Moreover, up to isomorphism,  there only finitely many
indecomposable modules in $Sub(M)\cap Ker Ext_H^1(M,-)$.
\end{lema}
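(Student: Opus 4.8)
The plan is to obtain the equivalence (1)$\Leftrightarrow$(2) by direct manipulation of minimal right $add(M)$-approximations, and then to deduce the counting statement from the observation that any two objects of $\mathcal{C}:=Sub(M)\cap Ker\,Ext_H^1(M,-)$ are $Ext^1$-orthogonal, so that finite direct sums of objects of $\mathcal{C}$ are partial tilting $H$-modules.

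Implication (2)$\Rightarrow$(1) is straightforward. If $U$ is a direct summand of $M$, then $U\in add(M)\subseteq Sub(M)$ and $Ext_H^1(M,U)$ is a direct summand of $Ext_H^1(M,M)=0$. If $U$ is a direct summand of $Ker(f)$ for a minimal right $add(M)$-approximation $f\colon M'\longrightarrow X$, then $Ker(f)\hookrightarrow M'\in add(M)$ gives $U\in Sub(M)$, and applying $Hom_H(M,-)$ to $0\to Ker(f)\to M'\xrightarrow{f}Im(f)\to 0$, together with the facts that every morphism $M\to Im(f)$ factors through $f$ (since $Im(f)\hookrightarrow X$ and $f$ is a right $add(M)$-approximation) and that $Ext_H^1(M,M')=0$, forces $Ext_H^1(M,Ker(f))=0$, hence $Ext_H^1(M,U)=0$.

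For (1)$\Rightarrow$(2), let $U$ be indecomposable in $\mathcal{C}$, fix a monomorphism $U\hookrightarrow M^m$ and set $X=M^m/U$ with quotient map $h\colon M^m\longrightarrow X$. Applying $Hom_H(M,-)$ to $0\to U\to M^m\xrightarrow{h}X\to 0$ and using $Ext_H^1(M,U)=0$ shows that $Hom_H(M,M^m)\longrightarrow Hom_H(M,X)$ is surjective, so $h$ is a right $add(M)$-approximation of $X$. Let $f\colon M'\longrightarrow X$ be the minimal right $add(M)$-approximation. Passing to a right minimal version of $h$ produces a decomposition $M^m\cong M'\oplus N$ (with $M',N\in add(M)$) under which $h$ corresponds to $f\oplus 0_N$, so $U=Ker(h)\cong Ker(f)\oplus N$. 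As $U$ is indecomposable, either $N=0$ and $U\cong Ker(f)$, or $Ker(f)=0$ and $U\cong N$ is a direct summand of $M^m$, hence of $M$. In both cases (2) holds.

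It remains to count, which is independent of the equivalence. The key point --- and the only one using heredity --- is that $Ext_H^1(U,V)=0$ for every $U\in Sub(M)$ and every $V\in Ker\,Ext_H^1(M,-)$: choosing a short exact sequence $0\to U\to M^m\to Q\to 0$ and applying $Hom_H(-,V)$ yields the exact segment $Ext_H^1(M^m,V)\longrightarrow Ext_H^1(U,V)\longrightarrow Ext_H^2(Q,V)$, whose left-hand term is $Ext_H^1(M,V)^m=0$ and whose right-hand term vanishes because $gl.dim\,H\le 1$. Consequently every finite direct sum of objects of $\mathcal{C}$ has no self-extensions, i.e. is a partial tilting $H$-module; since a partial tilting module over the hereditary algebra $H$ has at most $n$ pairwise non-isomorphic indecomposable summands, where $n$ is the number of simple $H$-modules (equivalently, the dimension vectors of the indecomposable summands of a module without self-extensions are linearly independent in $K_0(H)$; cf. \cite{R1}), the category $\mathcal{C}$ has at most $n$ indecomposable objects up to isomorphism. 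The one genuinely useful idea here is this last $Ext^1$-orthogonality observation; the remaining work is routine homological bookkeeping, the only delicate step being the handling of minimal approximations in (1)$\Rightarrow$(2).
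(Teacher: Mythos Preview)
Your proof is correct. The only point where it diverges from the paper is the implication (1)$\Rightarrow$(2). The paper starts from the \emph{minimal left} $add(M)$-approximation $u\colon U\rightarrowtail M'$ and observes that, because $Ext_H^1(M,U)=0$, the induced map $Hom_H(M,M')\to Hom_H(M,\mathrm{Coker}(u))$ is surjective, so the cokernel map $p\colon M'\twoheadrightarrow \mathrm{Coker}(u)$ is itself a right $add(M)$-approximation; left minimality of $u$ then forces right minimality of $p$, and $U=Ker(p)$ directly. Your route---embed $U$ arbitrarily into $M^m$, verify that the quotient map is a right $add(M)$-approximation, and then strip off the non-minimal summand---is equally valid but trades the left/right minimality correspondence for an explicit Krull--Schmidt splitting and a case analysis. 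The paper's version is a line shorter; yours avoids introducing left approximations at all. For (2)$\Rightarrow$(1) and for the finiteness count (partial tilting bound via $Ext_H^1$-vanishing on $Sub(M)$) your arguments coincide with the paper's.
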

\begin{proof}
$1)\Longrightarrow 2)$ Suppose that $U$ is not a direct summand of
$M$ and let $u:U\rightarrowtail M'$ be the minimal left
$add(M)$-approximation, then the induced map
$Hom_H(M,M')\longrightarrow Hom_H(M,Coker(u))$ is surjective due to
the fact that $Ext_H^1(M,U)=0$. That means that the cokernel map
$p:M'\twoheadrightarrow Coker(u)$ is a right $add(M)$-approximation.
But $p$ is right minimal since $u$ is left minimal. Therefore $U$ is
the kernel of a minimal right $add(M)$-approximation.

$2)\Longrightarrow 1) $ If $U$ is a direct summand of $M$ there is
nothing to prove, so we assume that $U$ is not so. Let
$f:M'\longrightarrow X$ be the minimal right $add(M)$-approximation
of an indecomposable module $X$ such that $U$ is a direct summand of
$Ker(f)$. Without loss of generality, we can assume that $X\in
Fac(M)\setminus add(M)$. Since the map
$f_*:Hom_H(M,M')\longrightarrow Hom_H(M,X)$ is surjective and
$Ext_H^1(M,M')=0$, we conclude that $Ext_H^1(M,Ker(f))=0$ and hence
$U\in Sub(M)\cap Ker Ext_H^1(M,-)$.

Let $\{U_1,...,U_r\}$ be any finite set of nonisomorphic
indecomposable modules in  $Sub(M)\cap Ker Ext_H^1(M)$ including
the direct summands of $M$. Then we put $U=\oplus_{1\leq i\leq
r}U_i$ and
 claim that $Ext_H^1(U,U)=0$. Indeed there exists a monomorphism $U\rightarrowtail
 M^s$, for some $s>0$, which yields an epimorphism $0=Ext_H^1(M^s,U)\twoheadrightarrow
 Ext_H^1(U,U)$. As a consequence $U$ is a partial tilting module,
 and hence
 $r\leq n$, where $n$ is the number of simple $H$-modules.
\end{proof}

Our last auxiliary proposition leads directly to the main result.

\begin{prop} \label{Auslander generator of H}
Suppose that  that $Ext_H^1(M,M)=0$. Then there exists an
Auslander generator of $H -mod$ containing $M$ as a direct
summand.
\end{prop}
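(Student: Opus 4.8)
The plan is to build an Auslander generator of $H\!-\!mod$ that contains $M$ as a summand, starting from \emph{any} Auslander generator $G_0$ of $H\!-\!mod$ (which exists by Iyama's theorem, $H$ being a finite dimensional algebra) and then enlarging it in a controlled way. Since $H$ is hereditary, $gl.dim(End_H(G))\le 3$ is equivalent to saying that every indecomposable $H$-module $X$ fits in a short exact sequence $0\to X\to G'\to G''\to 0$ with $G',G''\in add(G)$ that stays exact under $Hom_H(-,G)$ (equivalently, the kernel of a minimal right $add(G)$-approximation of $X$ lies in $add(G)$). So the first step is to fix $G$ to be the direct sum of the (finitely many, up to isomorphism) indecomposable summands of $G_0$, of $M$, and of $Ker(f)$ over all minimal right $add(M)$-approximations $f\colon M'\to X$. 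By Lemma \ref{finite subextension type}, this last family is finite up to isomorphism, so $G$ is a genuine (basic) module, it is a generator-cogenerator once we also throw in all indecomposable projectives and injectives, and by construction $M\in add(G)$.

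The main content is then to verify that this $G$ is an Auslander generator, i.e. that for every indecomposable $X$ the kernel of a minimal right $add(G)$-approximation of $X$ again lies in $add(G)$. First I would dispose of the easy reductions: if $X\in add(G)$ there is nothing to prove, and if $X$ is injective the approximation is split. For a general indecomposable $X$, consider the minimal right $add(M)$-approximation $f\colon M'\to X$ and form the exact sequence $0\to K\to M'\to \Im(f)\to 0$ with $K=Ker(f)$; by construction $K\in add(G)$, and since $H$ is hereditary $\Im(f)$ is a submodule of $M'\in add(M)$, hence $\Im(f)\in Sub(M)$. Now I would feed $\Im(f)$ (rather than $X$) into the approximation machinery of $add(G)$: because $M\in add(G)$ and $G_0\subseteq add(G)$, a minimal right $add(G)$-approximation of $\Im(f)$ can be compared with that coming from $G_0$, and the discrepancy is measured by $Sub(M)\cap Ker\,Ext_H^1(M,-)$, which Lemma \ref{finite subextension type} tells us consists of summands of $G$. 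Handling the quotient $X/\Im(f)$ (which, being a cokernel of an $add(M)$-approximation, lies in the part of $H\!-\!mod$ already controlled by $G_0$) via the horseshoe lemma then gives the desired $add(G)$-resolution of $X$ of length $\le 1$ that is $Hom_H(-,G)$-exact.

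The step I expect to be the main obstacle is precisely the bookkeeping that shows the kernel of the minimal right $add(G)$-approximation of an arbitrary indecomposable stays inside $add(G)$: one must simultaneously control the $G_0$-part (which handles modules with $Ext_H^1(M,-)$ possibly nonzero) and the $M$-part (which handles $Sub(M)$), and argue that combining the two sets of approximations does not produce new indecomposable kernel summands outside the finite list. The key technical point making this work is the hereditary hypothesis together with $Ext_H^1(M,M)=0$: hereditariness keeps every relevant kernel and cokernel inside $H\!-\!mod$ with no higher syzygies to track, and the vanishing of self-extensions of $M$ is exactly what makes the approximation sequences for objects of $Sub(M)$ remain exact after applying $Hom_H(M,-)$, as was already exploited in Lemma \ref{finite subextension type} and in Proposition \ref{Auslander generator of H versus Lambda}. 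Once the kernel computation is in place, a final check that $G$ is a generator-cogenerator (add all indecomposable projectives and injectives, which does not spoil the kernel condition since $H$ is hereditary) completes the argument.
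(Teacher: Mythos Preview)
Your proposal has a genuine gap at exactly the point you yourself flag as ``the main obstacle.'' Starting from an \emph{arbitrary} Auslander generator $G_0$ (via Iyama) and enlarging it by $M$ and by the kernels of $add(M)$-approximations does not, in general, yield another Auslander generator: once new summands are added, the minimal right $add(G)$-approximation of an indecomposable $X$ acquires new maps coming from $M$, from $V$, and from their interaction with the $G_0$-summands, and nothing forces the resulting kernel to decompose into pieces from your finite list. Your phrases ``the discrepancy is measured by $Sub(M)\cap\Ker\,Ext_H^1(M,-)$'' and ``horseshoe lemma for $0\to\Im(f)\to X\to X/\Im(f)\to 0$'' do not go through without Ext-vanishing between the kernel of the $add(G_0)$-approximation of $X/\Im(f)$ and the modules in $add(M\oplus V)$; for a generic $G_0$ there is no reason whatsoever to expect this. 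In particular, $X/\Im(f)$ is not ``already controlled by $G_0$'' in any way that survives the enlargement of $G_0$ to $G$.

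The paper's argument avoids this entirely by \emph{not} invoking an auxiliary Auslander generator. It takes the explicit module $G=H\oplus V\oplus M\oplus DH$, where $V$ is the (finite, by the lemma) sum of the indecomposables in $Sub(M)\cap\Ker\,Ext_H^1(M,-)$ not already in $add(M)$. The decisive technical point, which your sketch never isolates, is that $M\oplus V$ is a \emph{partial tilting module}: one has $Ext_H^1(M\oplus V,M\oplus V)=0$, and hence $Ext_H^1(M\oplus V,-)$ vanishes on all of $Fac(M)$. With this in hand one builds an $add(M\oplus V)$-approximation of $X$ by hand: take the $add(M)$-approximation $p:M'\to X$ with image $T=tr_M(X)$, lift a right $add(V)$-approximation of $X/T$ to a map $g:V'\to X$ (possible since $Ext_H^1(V,T)=0$), and verify directly that $(g\ p):V'\oplus M'\to X$ is a right $add(M\oplus V)$-approximation whose kernel lies in $add(M\oplus V)$ (the relevant extension splits precisely because $M\oplus V$ has no self-extensions). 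One then completes to a right $add(G)$-approximation by adding a projective $Q\to X$ covering the cokernel; hereditariness of $H$ makes the additional kernel contribution projective, hence in $add(H)\subset add(G)$. Thus the role you assign to an abstract $G_0$ is played entirely by $H\oplus DH$, and it is the hereditary hypothesis---not any imported Auslander-generator property---that makes the final projective-completion step harmless.
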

\begin{proof}
 Our goal is to construct an Auslander
generator $G$ of $H$-module containing $M$ as a direct summand.
 Let $\{V_1,...,V_m\}$ be the finite
set of indecomposable modules in $Sub(M)\cap Ker Ext_H^1(M,-)$ which
are not in $add(M)$.  We put $V=\oplus_{1\leq i\leq m}V_i$ and shall
prove that $G=:H\oplus V\oplus M\oplus DH$ is an Auslander generator
of $H -mod$.

We need to show that if $X\not\in add(G)$ is an indecomposable
$H$-module, then there is a right $add(G)$-approximation
$G_X\longrightarrow X$ whose kernel is in $add(G)$. Since $X$ is not
injective and $H$ is hereditary, every right $add(H\oplus V\oplus
M)$-approximation is already an $add(G)$-approximation. We put
$T=:tr_M(X)=\sum_{f\in Hom_H(M,X)}Im(f)$. Then the minimal right
$add(M)$-approximation $p:M'\longrightarrow X$ has $Im(p)=T$ and we
get two induced exact sequences:

\begin{center}
$0\rightarrow U\hookrightarrow
M'\stackrel{\tilde{p}}{\longrightarrow}T\rightarrow 0$

\hspace*{12cm}(*)

$0\rightarrow T\hookrightarrow
X\stackrel{q}{\longrightarrow}X/T\rightarrow 0$.
\end{center}
By Lemma \ref{finite subextension type}, we know that  $U\in
 add(M\oplus V)$. In the proof of that lemma we have shown that $M\oplus
 V$ is a partial tilting module, which implies also that $Ext_H^1(M\oplus
 V,-)$ vanishes over all modules in $Fac(M)$.
 In particular, we get that $Hom_H(V,-)$ keeps exact both sequences (*). Keeping exact
the first one means that every morphism $V\longrightarrow T$ factors
through $\tilde{p}$,  while keeping exact the second one implies
that we can choose a morphism $g:V'\longrightarrow X$ such that the
composition
$V'\stackrel{g}{\longrightarrow}X\stackrel{q}{\twoheadrightarrow}X/T$
is the minimal right $add(V)$-approximation of $X/T$.

We claim that $\begin{pmatrix}g & p \end{pmatrix}:V'\oplus
M'\longrightarrow X$ is a right $add(V\oplus M)$-approximation.
Clearly, every morphism $M\longrightarrow X$ factors through
$\begin{pmatrix}g & p \end{pmatrix}$, so we only need to see that
the same is true for every morphism $u:V\longrightarrow X$. By
definition of $g$, we have that $q\circ u$ factors through $q\circ
g:V'\longrightarrow X/T$ so that there is a $v:V\longrightarrow
V'$ such that $q\circ u=q\circ g\circ v$. Then $u-g\circ v$
factors through  $Ker(q)=T$ and we have a morphism
$w:V\longrightarrow T$ such that $j\circ w=u-g\circ v$, where
$j:T\hookrightarrow X$ is the inclusion. From the previous
paragraph we get that $w$ factors through $\tilde{p}$ and so there
is a morphism $h:V\longrightarrow M'$ such that $w=\tilde{p}\circ
h$ and then $u=p\circ h+g\circ v=\begin{pmatrix}g & p
\end{pmatrix}\circ \begin{pmatrix}v\\ h \end{pmatrix}$. This
settles our claim.

We next look at $Z=:Ker\begin{pmatrix}g & p \end{pmatrix}$. By
explicit construction of the pullback of $g$ and $p$, we see that
$Z$ fits into an exact sequence

\begin{center}
$0\rightarrow U\longrightarrow Z\longrightarrow Ker(q\circ
g)\rightarrow 0$, \hspace{1cm} (**)
\end{center}
and we already know that  $U\in add(M\oplus V)$. On the other hand,
the exact sequence

\begin{center}
$0\rightarrow
Hom_H(M,T)\stackrel{\cong}{\longrightarrow}Hom_H(M,X)\longrightarrow
Hom_H(M,X/T)\longrightarrow Ext_H^1(M,T)=0$
\end{center}
gives that $Hom_H(M,X/T)=0$ and so $Hom_H(M,Im(q\circ g))=0$. We
then get an exact sequence

\begin{center}
$0=Hom_H(M,Im(q\circ g)\longrightarrow Ext_H^1(M,Ker(q\circ
g))\longrightarrow Ext_H^1(M,V')=0$,
\end{center}
which shows that $Ker(q\circ f)\in Sub(M)\cap Ker
Ext_H^1(M,-)=add(M\oplus V)$.  But then the sequence (**) splits,
because $Ext_H^1(M\oplus V,M\oplus V)=0$. Therefore
$Z=Coker\begin{pmatrix}g & p
\end{pmatrix}\in add(M\oplus V)$.

In order to complete the desired right $add(G)$-approximation of $X$
we only need to consider a morphism $t:Q\longrightarrow X$ such that
the composition
$Q\stackrel{t}{\longrightarrow}X\stackrel{pr.}{\twoheadrightarrow}
Coker\begin{pmatrix}g & p \end{pmatrix}$ is a projective cover. It
is straightforward to see that the map

\begin{center}
$\begin{pmatrix}g & p & t \end{pmatrix}:V'\oplus M'\oplus
Q\longrightarrow X$
\end{center}
is a right $add(G)$-approximation and, by explicit construction of
the pullback of $\begin{pmatrix}g & q \end{pmatrix}$ and $t$, we
readily see that

\begin{center}
$Ker\begin{pmatrix}g & p & t \end{pmatrix}\cong Ker\begin{pmatrix}g
& p
\end{pmatrix}\oplus\Omega^1(Coker\begin{pmatrix} g &
p\end{pmatrix})$,
\end{center}
which belongs to $add(G)$ because $\Omega^1(Coker\begin{pmatrix} g
& p\end{pmatrix})$ is a projective $H$-module.
\end{proof}

As a straightforward consequence of the two propositions, we derive
the main result of the paper.

\begin{teor} \label{main theorem}
Let $H$ be a hereditary algebra of infinite representation type,
$M$ be a left $H$-module such that $Ext_H^1(M,M)=0$ and $B$ be a
semisimple subalgebra of $End_H(M)^{op}$. Then $\Lambda
=\begin{pmatrix}B & 0\\ M & H
\end{pmatrix}$  has representation dimension equal to $3$.
\end{teor}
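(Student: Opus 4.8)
The plan is to assemble the three auxiliary results just established into a single chain of inequalities and then argue that equality, rather than just $\leq 3$, must hold. First I would invoke Proposition \ref{Auslander generator of H}: since $Ext_H^1(M,M)=0$ by hypothesis, there exists an Auslander generator $G$ of $H\text{-}mod$ having $M$ as a direct summand. This is exactly the input required by Proposition \ref{Auslander generator of H versus Lambda}, whose remaining hypothesis (that $M$ has no selfextensions) is again part of the theorem's assumptions. Applying that proposition to the module $\hat{G}=\begin{pmatrix}0\\ G\end{pmatrix}\oplus\begin{pmatrix} B\\ M\end{pmatrix}\oplus D\Lambda$ yields $gl.dim(End_\Lambda(\hat{G}))\leq 3$. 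Since $\hat{G}$ contains $D\Lambda$ as a summand it is a cogenerator, and since it contains the projective $\Lambda$-modules $\begin{pmatrix}Kp_i\\ M_i\end{pmatrix}$ together with $\begin{pmatrix}0\\ G\end{pmatrix}$ (which has all of $H$ as a summand, $H$ being part of an Auslander generator), it is also a generator; hence $\hat{G}$ is a generator-cogenerator of $\Lambda\text{-}mod$ and its endomorphism ring witnesses $rep.dim(\Lambda)\leq 3$.

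For the reverse inequality $rep.dim(\Lambda)\geq 3$, the key point is Auslander's theorem that an Artin algebra has representation dimension at most $2$ if and only if it is of finite representation type. So it suffices to show that $\Lambda$ is of infinite representation type whenever $H$ is. This is immediate: the full subcategory of $\Lambda\text{-}mod$ consisting of modules of the form $\begin{pmatrix}0\\ X\end{pmatrix}$ is equivalent to $H\text{-}mod$ (as noted in the preliminary discussion preceding Proposition \ref{tame case}), and this embedding sends pairwise non-isomorphic indecomposable $H$-modules to pairwise non-isomorphic indecomposable $\Lambda$-modules. Since $H$ has infinitely many indecomposables up to isomorphism, so does $\Lambda$, whence $rep.dim(\Lambda)\geq 3$. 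Combining the two inequalities gives $rep.dim(\Lambda)=3$.

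I do not expect any genuine obstacle here: the theorem is explicitly billed in the introduction as a straightforward consequence of Propositions \ref{Auslander generator of H versus Lambda} and \ref{Auslander generator of H}, and all the real work has been absorbed into those two results together with Lemma \ref{finite subextension type}. The only mild care needed is the verification that $\hat{G}$ is genuinely a generator-cogenerator — i.e. that every indecomposable projective and every indecomposable injective $\Lambda$-module appears in $Add(\hat{G})$ — and the observation that the lower bound comes for free from the fully faithful exact embedding $H\text{-}mod\hookrightarrow\Lambda\text{-}mod$ together with Auslander's characterization of representation-finite algebras. Both points are routine, so the proof is essentially a two-line deduction.
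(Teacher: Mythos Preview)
Your proposal is correct and matches the paper's own proof essentially line for line: invoke Proposition \ref{Auslander generator of H} to obtain $G$, feed it into Proposition \ref{Auslander generator of H versus Lambda} to get $gl.dim(\End_\Lambda(\hat{G}))\leq 3$, and then use Auslander's characterization together with the fact that $\Lambda$ inherits infinite representation type from $H$ to force equality. The paper additionally opens with the reduction to the basic case ($H$ basic, $B\cong K\times\cdots\times K$), but you are implicitly relying on that same preliminary discussion, so there is no substantive difference.
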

\begin{proof}
As mentioned at the beginning of the paper, there is no loss of
generality in assuming that $H$ is basic and $B\cong
K\times\stackrel{r}{...}\times K$ is the semisimple subalgebra of
$End_H(M)^{op}$ associated to a fixed decomposition $M=\oplus_{1\leq
i\leq r}M_i$. Then from Proposition \ref{Auslander generator of H}
we know that there is an Auslander generator $G$ of $H -mod$
containing $M$ as a direct summand. Finally Proposition
\ref{Auslander generator of H versus Lambda} gives a
generator-cogenerator $\hat{G}$ of $\Lambda -mod$ such that
$gl.dim(End_\Lambda (\hat{G}))\leq 3$. Since $\Lambda$ is of
infinite representation type, we conclude that $rep.dim(\Lambda
)=3$.
\end{proof}


\begin{thebibliography}{9999}
\bibitem{APT} {\sc ASSEM, I.; PLATZECK, M.I.; TREPODE, S.}: On the
representation dimension of tilted and laura algebras. J. Algebra
{\bf 296} (2006), 426-439.

\bibitem{A} {\sc AUSLANDER, M.}: Representation dimension of Artin
algebras. Queen Mary College Maths. Notes. London (1971).

\bibitem{ARS} {\sc AUSLANDER, M.; REITEN, I.; SMALO, S.O.}:
"Representation theory of Artin algebras". Cambridge Studies in Adv.
Maths. {\bf 36}. Cambridge Univ. Press (1995).

\bibitem{BHS} {\sc BOCIAN, R.; HOLM, T.; SKOWRONSKI, A.}: The representation
dimension of domestic weakly symmetric algebras. Cent. Eur. J.
Math. {\bf 2}(1) (2004), 67-75.

\bibitem{D} {\sc DUGAS, A.}: Representation dimension as a relative
homological invariant of stable equivalence. Algebras and Repres.
Theory {\bf 10}(3) (2007), 223-240.

\bibitem{EHIS} {\sc ERDMANN, K.; HOLM, T.; IYAMA, O.; SCHRÖER, J.}:
Radical embeddings and representation dimension. Adv. Maths. {\bf
185}(1) (2004), 159-177.



\bibitem{G} {\sc GUO, X.}: Representation dimension: an invariant
under stable equivalence. Trans. Amer. Math. Soc. {\bf 357}(8),
3255-3263.


\bibitem{H1} {\sc HOLM, T.}: The representation dimension of Schur
algebras: the tame case. Quart. J. Math. {\bf 55} (2004), 477-490.

\bibitem{H2} {\sc HOLM, T.}: Representation dimension of some tame
blocks of finite groups. Algebra Colloq. {\bf 10} (2003), 275-284.

\bibitem{I}  {\sc IYAMA, O.}: Finiteness of representation
dimension. Proc. Amer. Math. Soc. {\bf 131}(4) (2003), 1011-1014.

\bibitem{K} {\sc KERNER, O.}: Representations of wild quivers.
On "Representation theory and related topics" (México 1994). CMS
Conference Proceedings {\bf 19}. Amer. Math. Soc. (1996).

\bibitem{O1} {\sc OPPERMANN, S.}: Private communication.

\bibitem{O} {\sc OPPERMANN, S.}: Wild algebras have one-point
extensions of representation dimension at leats four. Preprint
(available at http://www.math.ntnu.no/~opperman/).

\bibitem{R1} {\sc RINGEL, C.M.}: "Tame algebras and integral
quadratic forms". Lect. Notes Maths. {\bf 1099}. Springer-Verlag
(1984).

\bibitem{R2} {\sc RINGEL, C.M.}: An introduction to the
representation dimension of Artin algebras. Oberwolfach Lecture
(available at
http://www.math.uni-bielefeld.de/~ringel/lectures.html).

\bibitem{R} {\sc ROUQUIER, R.}: Representation dimension of exterior
algebras. Invent. Math. {\bf 165}(2) (2006), 357-367.

\bibitem{X} {XI, C.}: Representation dimension and quasi-hereditary
algebras. Adv. Maths. {\bf 168} (2002), 193-212.










\end{thebibliography}
\end{document}